\documentclass[12pt]{article}
\usepackage{amsthm}
\usepackage{times}
\usepackage{bbding}
\usepackage{authblk}
\usepackage[colorlinks]{hyperref}
\usepackage[margin=1in]{geometry}
\usepackage[utf8]{inputenc}
\usepackage{amsmath,accents}
\usepackage[normalem]{ulem}
\usepackage{mathtools}
\usepackage{amsfonts}
\usepackage{graphicx}
\usepackage{amssymb}
\usepackage{xcolor}


\newtheorem{theorem}{Theorem}[section]
\newtheorem{remark}[theorem]{Remark}
\newtheorem{lemma}[theorem]{Lemma}
\newtheorem{corollary}[theorem]{Corollary}
\newtheorem{proposition}[theorem]{Proposition}
\newtheorem{definition}[theorem]{Definition}

\newcommand{\spec}{\mathrm{spec}}
\newcommand{\ums}{\mathcal{U}}
\newcommand{\ms}{\mathcal{M}}
\newcommand{\dgh}{d_{\mathrm{GH}}}
\newcommand{\ugh}{u_{\mathrm{GH}}}
\newcommand{\eps}{\varepsilon}
\newcommand{\dH}{d_\mathrm{H}}

\title{A novel construction of Urysohn universal ultrametric space via the Gromov-Hausdorff ultrametric}

\author{Zhengchao Wan}

\affil{Department of Mathematics\\
 	The Ohio State University\\
 	Columbus, Ohio 43210\\
 	\href{mailto:wan.252@osu.edu}{wan.252@osu.edu}}
 	
\providecommand{\keywords}[1]
{
  \small	
  \textbf{\textit{Keywords---}} #1
}

\providecommand{\msc}[1]
{
  \small	
  \textbf{\textit{MSC---}} #1
}

\date{}

\begin{document}
\maketitle 

\begin{abstract}
We establish universality and ultra-homogeneity of $(\ums,\ugh)$, the collection of all compact ultrametric spaces endowed with the so-called \emph{Gromov-Hausdorff ultrametric}. This result also gives rise to a novel construction of the so-called \emph{$R$-Uryoshn universal ultrametric space} for each countable subset $R\subset\mathbb{R}_{\geq 0}$ containing $0$.
\end{abstract}

{\msc{51F99, 54E35}}

\keywords{Gromov-Hausdorff ultrametric,  universal ultrametric space, dendrograms}

\section{Introduction}
A metric space is called \emph{universal} if it contains isometric copies of all separable metric spaces. The study of universal metric space dates back to \cite{urysohn1927espace} in which Urysohn identified a unique (up to isometry) Polish\footnote{A metric space is called Polish if it is complete and separable.} universal space $\mathbb{U}$ (named the \emph{Urysohn universal metric space}) that satisfies the following \emph{ultra-homogeneity} condition: given a finite metric space $B$, a subset $A\subset B$ and an isometric embedding $\varphi:A\rightarrow \mathbb{U}$, there exists an isometric embedding $\psi:B\rightarrow\mathbb{U}$ such that $\psi|_A=\varphi$. 

An ultrametric space is a special metric space that satisfies the strong triangle inequality (cf. Equation \ref{eq:strong-triangle}). Vestfrid constructed in \cite{vestfrid1994universal} the first example of a universal and ultra-homogeneous\footnote{Whenever discussing universality and ultra-homogeneity for ultrametric spaces, both conditions are restricted to only the collection of ultrametric spaces, e.g., an ultrametric space is universal if it contains isometric copies of all separable ultrametric spaces.} ultrametric space (which we call a \emph{Urysohn universal ultrametric space}), based on which he proved that each separable ultrametric space is isometrically embedable into both $\ell_1$ and $\ell_2$. However, his construction of universal ultrametric space is not separable. In fact, any separable ultrametric space must have a countable spectrum\footnote{The spectrum of a metric space $(X,d_X)$ is the distance set $\spec(X)\coloneqq\{d_X(x,x'):\,x,x'\in X\}$.} (see for example \cite{gao2011polish}) and thus it does not satisfy the universality condition. By restricting to only ultrametric spaces with fixed countable spectrum $R\subset\mathbb{R}_{\geq 0}$, Gao and Shao \cite{gao2011polish} turned to consider the so-called $R$-universality and $R$-ultra-homogeneity conditions and thus defined the $R$-Urysohn universal ultrametric space (cf. Definition \ref{def:R-urysohn}). They provided several constructions and proved uniqueness of the $R$-universal and $R$-ultra-homogeneous ultrametric space for any $R\subset \mathbb{R}_{\geq 0}$ who contains $0$. 

The \emph{Gromov-Hausdorff distance} $\dgh$ introduced by Gromov in \cite{gromov1981groups} is a natural distance comparing compact metric spaces. 
The \emph{Gromov-Hausdorff ultrametric} $\ugh$ was first introduced by Zarichnyi in \cite{zarichnyi2005gromov} as an analogue to $\dgh$ for comparing compact ultrametric spaces. Denote by $\ums$ the collection of all compact ultrametric spaces. Zarichnyi established that $(\ums,\ugh)$ is a complete but not separable ultrametric space. Some theoretical and computational aspects of $(\ums,\ugh)$ have been further studied in \cite{qiu2009geometry,memoli2019gromov}. In particular, a structural theorem for $\ugh$ (cf. Theorem \ref{thm:ugh}) is identified in \cite{memoli2019gromov} which significantly helps  in estimating and computing $\ugh$ throughout this paper.

\paragraph{Contributions.} We establish in this paper the universality and ultra-homogeneity of the Gromov-Hausdorff ultrametric space $(\ums,\ugh)$. This result is interesting in that the collection of all compact ultrametric spaces is itself universal for ultrametric spaces. We then naturally identify a novel construction of the $R$-Urysohn universal ultrametric space for any countable $R\subset\mathbb{R}_{\geq 0}$ containing $0$ using $(\ums,\ugh)$. In the course of proving universality of $(\ums,\ugh)$, we developed a notion named by \emph{admissible order} which has a close relation with graphical representations of dendrograms. This concept allows us to at least prove finite universality of $\ums$, i.e., finite ultrametric spaces can be isometrically embedded into $\ums$. Though in the end we could not yet prove universality of $\ums$ via this approach, we think the concept of an meaningful order on an ultrametric space is interesting itself and we provide a detailed discussion in Section \ref{sec:admissible-order}.

\paragraph{Related work.} It is natural to wonder what is the relation between the Urysohn universal space $\mathbb{U}$ and $(\ms,\dgh)$, the collection of all compact metric spaces endowed with the Gromov-Hausdorff distance. In \cite[Ch. 3, Exercise (b)]{gromov2007metric}, Gromov first observed that $\ms\cong \mathrm{H}(\mathbb{U})/\mathrm{Iso}(\mathbb{U})$, where $\mathrm{H}(\mathbb{U})$ denotes the hyperspace of $\mathbb{U}$ consisting of all nonempty compact subsets of $\mathbb{U}$ endowed with the Hausdorff distance $d_\mathrm{H}^\mathbb{U}$ and $\mathrm{Iso}(\mathbb{U})$ denotes the isometry group of $\mathbb{U}$; see also \cite{antonyan2020gromov} for more details and a proof. This implicitly implies that $\ms$ is not isometric to $\mathbb{U}$. In fact, it was proved later in \cite{iliadis2017local} that $(\ms,\dgh)$ does not satisfy the ultra-homogeneity. However as for universality, the authors in \cite{iliadis2017local} proved that the collection $\ms$ of all compact metric spaces endowed with the Gromov-Hausdorff distance $\dgh$ contains isometric copies of all finite metric spaces. It still remains open whether $(\ms,\dgh)$ is a universal space containing isometric copies of all separable (or even just compact) metric spaces. 

\section{Preliminaries}\label{sec:pre}
{\paragraph{Notions about metric spaces.} A metric space is a pair $(X,d_X)$ where $X$ is a set and $d_X$ is a function $d_X:X\times X\rightarrow[0,\infty)$ satisfying the following three conditions:
\begin{enumerate}
    \item for any $x,y\in X$, $d_X(x,y)\geq 0$ and the equality holds if and only if $x=y$; 
    \item for any $x,y\in X$, $d_X(x,y)=d_X(y,x)$; 
    \item for any $x,y,z\in X$, $d_X(x,z)\leq d_X(x,y)+d_X(y,z).$
\end{enumerate}
We say that two metric spaces $(X,d_X)$ and $(Y,d_Y)$ are \emph{isometric}, denoted by $(X,d_X)\cong (Y,d_Y)$ or simply by $X\cong Y$, if there exists a bijective map $\varphi:X\rightarrow Y$ such that for any $x,x'\in X$, $d_X(x,x')=d_Y(\varphi(y),\varphi(y')). $ We call any such bijective map $\varphi$ an \emph{isometry}. We define the \emph{spectrum} $\spec(X)$ of a metric space $(X,d_X)$ to be the set $\spec(X)\coloneqq\{d_X(x,x'):\,x,x'\in X\}$.}

\paragraph{Ultrametric spaces and dendrograms.} 
A metric space $(X,d_X)$ is called an ultrametric space if $d_X$ satisfies the so-called strong triangle inequality: for any $x,y,z\in X$,
\begin{equation}\label{eq:strong-triangle}
    d_X(x,z)\leq \max(d_X(x,y),d_X(y,z)).
\end{equation}
We usually denote by $u_X$ (instead of $d_X$) the metric of an ultrametric space. 

One important visualization of an ultrametric space is that of a \emph{dendrogram}. A dendrogram is a tree representation of hierarchical clustering of a metric space. See below for a precise definition and Figure \ref{fig:ultra-dendro} for a graphical representation of a dendrogram:

\begin{figure}
    \centering
    \includegraphics{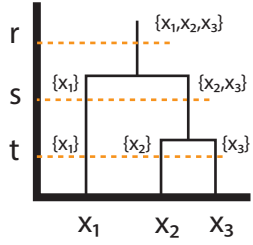}
    \caption{\textbf{Graphical representation of a dendrogram.} In the figure we represent a dendrogram $\theta_X$ over the three-point space $X=\{x_1,x_2,x_3\}$ via a rooted tree (horizontal line segments should be regarded as vertices). For example, $\theta_X(r)=\{\{x_1,x_2,x_3\}\}$, $\theta_X(s)=\{\{x_1\},\{x_2,x_3\}\}$ and $\theta_X(t)=\{\{x_1\},\{x_2\},\{x_3\}\}$.}
    \label{fig:ultra-dendro}
\end{figure}

\begin{definition}[Dendrogram \cite{carlsson2010characterization}]
Given a finite set $X$, a dendrogram is a function $\theta_X:[0,\infty)\rightarrow \mathrm{Part}(X)$\footnote{$\mathrm{Part}(X)$ denotes the set of all partitions of $X$. For each $\{X_1,\cdots,X_k\}\in\mathrm{Part}(X)$, we call an element $X_i$ a block for $i=1,\cdots,k$.} that satisfies the following conditions:
\begin{enumerate}
    \item $\theta_X(0)=\{\{x\}:\,x\in X\}$ is the singleton partition;
    \item for any $0\leq t<s$, $\theta_X(s)$ is coarser than $\theta_X(t)$, i.e., for each block $B\in\theta_X(t)$, there exists a block $C\in\theta_X(s)$ such that $B\subset C$;
    \item there exists $T>0$ such that $\theta_X(T)=\{X\}$;
    \item for each $t\geq 0$, there exists $\eps>0$ such that $\theta_X(s)=\theta_X(t)$ for all $s\in[t,t+\eps]$.
\end{enumerate}
\end{definition}

Given a finite set $X$, denote by $\mathcal{D}(X)$ the collection of all dendrograms over $X$ and by $\ums(X)$ the collection of all ultrametrics on $X$. Then, there exists a bijection between $\mathcal{D}(X)$ and $\ums(X)$. For completeness, we describe a bijective map $\Phi_X:\mathcal{D}(X)\rightarrow\mathcal{U}(X)$ and its inverse $\Psi:\mathcal{U}(X)\rightarrow\mathcal{D}(X)$ as follows:

For any $\theta\in\mathcal{D}(X)$, we define $u:X\times X\rightarrow\mathbb{R}_{\geq 0}$ by $$u(x,x')\coloneqq\inf\{t\geq 0:\, x\text{ and }x'\text{ belong to the same block in }\theta(t)\},\quad\forall x,x'\in X.$$
It is easy to check that $u\in\mathcal{U}(X)$ and we let $\Phi_X(\theta)=u$.

Now for any $u\in\mathcal{U}(X)$, we define $\theta:[0,\infty)\rightarrow \mathrm{Part}(X)$ by $\theta(t)\coloneqq\{[x]_t^X:\,x\in X\}$ for each $t\geq 0$, where $[x]_t^X\coloneqq\{x'\in X:\,u(x,x')\leq t\}.$ Then, $\theta\in \mathcal{D}(X)$ and we let $\Psi_X(u)=\theta$. 

It is easy to check that both $\Phi_X$ and $\Psi_X $ are bijective and they are inverse to each other; see \cite{carlsson2010characterization} for more details. In the sequel we will also use notation $[x]_t=[x]_t^X$ when the underlying set $X$ is clear from the context. From now on, for any given ultrametric space $(X,u_X)$, we always denote by $\theta_X=\Psi_X(u_X)$ the dendrogram corresponding to $u_X$.

\paragraph{Quotient operator on ultrametric spaces.} The following is a crucial operator for ultrametric spaces defined in \cite{memoli2019gromov}. 
\begin{definition}[Quotient]
Given a finite ultrametric space $X$ and $t\geq 0$, we let $X_t\coloneqq\{[x]_t:\,x\in X\}$. We construct an ultrametric $u_{X_t}$ on $X_t$ as follows:
$$u_{X_t}([x]_t,[x']_t)\coloneqq\begin{cases}u_X(x,x'), & u_X(x,x')>t\\ 0, & u_X(x,x')\leq t\end{cases}.$$
We call $(X_t,u_{X_t})$ the quotient of $X$ at level $t$.
\end{definition}

Note that $(X_0,u_{X_0})\cong (X,u_X)$. Intuitively speaking, the dendrogram $\theta_{X_t}$ corresponding to $(X_t,u_{X_t})$ is generated from the one $\theta_X$ corresponding to $(X,u_X)$ by simply forgetting structures of $\theta_X$ below level $t$; see Figure \ref{fig:quotient} for an illustration. It is worth noting that the quotient of a compact ultrametric space is still compact and the quotient of a Polish ultrametric space remains Polish. Furthermore, we have the following two more refined results.

\begin{figure}
    \centering
    \includegraphics{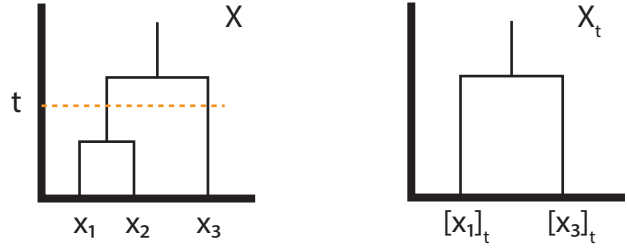}
    \caption{\textbf{Dendrogram illustration of the quotient operator.} The leftmost dendrogram represents of a three-point ultrametric space $X$, whereas the rightmost dendrogram represents the quotient $X_t$ of $X$ at level $t$. Note that after taking the quotient, structures of $\theta_X$ below $t$ disappear.}
    \label{fig:quotient}
\end{figure}

\begin{lemma}\label{lm:compact-quotient}
If $X$ is a compact ultrametric space, then for any $t>0$, $X_t$ is a finite space.
\end{lemma}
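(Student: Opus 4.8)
The plan is to exploit the defining feature of ultrametrics: once $t>0$, each equivalence class $[x]_t$ is simultaneously open and closed, after which compactness finishes the argument almost immediately. First I would record the elementary observation that $[x]_t=\{x'\in X:\,u_X(x,x')\leq t\}$ is precisely the closed ball of radius $t$ centered at $x$, and that the collection $\{[x]_t:\,x\in X\}$ is exactly the block partition $\theta_X(t)$ of $X$ into such balls.

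The crucial step is to show that for $t>0$ every such ball is open. Here I would lean entirely on the strong triangle inequality \eqref{eq:strong-triangle}: if $y\in[x]_t$, then for any $z$ with $u_X(y,z)\leq t$ we have $u_X(x,z)\leq\max(u_X(x,y),u_X(y,z))\leq t$, so the whole ball $[y]_t$ sits inside $[x]_t$; since $t>0$, this ball in turn contains the genuinely open set $\{z\in X:\,u_X(y,z)<t\}$, which is a neighborhood of $y$. Hence $[x]_t$ is a neighborhood of each of its points and is therefore open. The same computation, run symmetrically, shows that any two classes are either identical or disjoint, so $\{[x]_t:\,x\in X\}$ is a genuine partition of $X$ into pairwise disjoint nonempty open sets.

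With these facts in hand the conclusion follows from compactness. The partition is an open cover of $X$, so I would extract a finite subcover $[x_1]_t,\dots,[x_n]_t$; by disjointness no class can be omitted, since a point of an omitted class would lie in none of the chosen blocks and so would be uncovered. Thus the partition already consists of the finitely many blocks $[x_1]_t,\dots,[x_n]_t$, and therefore $X_t$ is finite. I do not anticipate a genuine obstacle; the only point demanding care is the appeal to $t>0$ to secure openness of the balls (for $t=0$ the classes are singletons and the finiteness claim can fail), so the combination of the strong triangle inequality with the strict positivity of $t$ is exactly what drives the proof.
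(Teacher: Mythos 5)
Your proof is correct, but it takes a genuinely different route from the paper's. The paper argues via total boundedness: compactness yields a finite $t$-net $X_N$, every point $x$ satisfies $u_X(x,x_N)\leq t$ for some net point $x_N$, hence $[x]_t=[x_N]_t$, so there are at most $|X_N|$ classes — three lines, no topology. You instead use the full open-cover formulation of compactness: the strong triangle inequality makes each closed ball $[x]_t$ open (indeed clopen) when $t>0$, the classes form a partition of $X$ into pairwise disjoint nonempty open sets, and a finite subcover of this cover must, by disjointness, be the whole partition. Both arguments are sound; each has a modest advantage. The paper's argument is leaner and actually proves slightly more: it only uses total boundedness, so the conclusion holds for any totally bounded (not necessarily complete) ultrametric space. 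Your argument isolates a structural fact of independent interest — that closed balls of positive radius in an ultrametric space are clopen, so the quotient map $x\mapsto[x]_t$ is locally constant — which is the cleaner conceptual explanation for why the quotient is discrete, and it is the fact one would reach for in purely topological generalizations. One small point to make explicit if you write this up: the disjointness-or-equality of the classes is what guarantees that omitting any block from the finite subcover leaves a point uncovered; you state this correctly, but it deserves the one-line justification that $u_X(x,x')\leq t$ is an equivalence relation, which again rests on the strong triangle inequality.
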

\begin{proof}
Since $X$ is compact, there exists a finite $t$-net $X_N\subset X$, i.e., for any $x\in X$, there exists $x_N\in X_N$ such that $u_X(x,x_N)\leq t$. Hence, $[x]_t=[x_N]_t$. Therefore, $X_t\subset\{[x_N]_t:\,x_N\in X_N\}$ and thus $X_t$ is a finite set.
\end{proof}

\begin{lemma}\label{lm:countable-quotient}
If $X$ is a Polish ultrametric space, then for any $t>0$, $X_t$ is a countable space.
\end{lemma}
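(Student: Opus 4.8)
The plan is to mimic the proof of Lemma \ref{lm:compact-quotient}, replacing the finite $t$-net supplied there by compactness with a countable dense subset supplied here by separability. Since $X$ is Polish it is in particular separable, so fix a countable dense subset $D\subset X$.

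First I would record the relevant structural feature of the quotient: for $t\geq 0$ the elements of $X_t$ are exactly the classes $[x]_t=\{x'\in X:\,u_X(x,x')\leq t\}$, and the relation $x\sim_t x'\iff u_X(x,x')\leq t$ is an equivalence relation (transitivity being precisely the strong triangle inequality \ref{eq:strong-triangle}). In particular, if some point $d$ satisfies $d\in[x]_t$, then $[x]_t=[d]_t$. This is the ultrametric feature that makes the argument work and is already implicit in the dendrogram correspondence, so I do not expect it to pose any real difficulty.

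Next I would use $t>0$ together with density. Fix an arbitrary $x\in X$. Because $D$ is dense and $t>0$, there exists $d\in D$ with $u_X(x,d)<t$, hence $u_X(x,d)\leq t$ and therefore $d\in[x]_t$. By the observation above, $[x]_t=[d]_t$. Since $x$ was arbitrary, every class in $X_t$ is represented by a point of $D$, that is $X_t\subseteq\{[d]_t:\,d\in D\}$. As $D$ is countable, so is $X_t$, which is the desired conclusion.

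The proof is genuinely short, and the only subtlety worth flagging is the necessity of the hypothesis $t>0$: density only guarantees a point strictly within distance $t$, and when $t=0$ one has $[x]_0=\{x\}$, so that $X_0\cong X$ may well be uncountable. Thus the step that truly uses the hypothesis is the single inequality $u_X(x,d)<t$, and I anticipate no obstacle beyond verifying that membership in a closed ball is an equivalence relation in the ultrametric setting.
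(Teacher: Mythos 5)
Your proof is correct and follows essentially the same argument as the paper's: take a countable dense subset, use $t>0$ to find for each $x$ a representative $d$ with $u_X(x,d)\leq t$, conclude $[x]_t=[d]_t$ via the strong triangle inequality, and so bound $X_t$ by the countably many classes of points of the dense set. Your explicit remarks on why $\sim_t$ is an equivalence relation and why $t>0$ is needed are details the paper leaves implicit, but the route is identical.
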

\begin{proof}
Let $X_c\subset X$ be a countable dense subset. Then, $\{[x_c]_t:\,x_c\in X_c\}$ is a countable subset of $X_t$. For any $x\in X$, there exists $x_c\in X_c$ such that $u_X(x,x_c)\leq t$ since $X_c$ is dense. Hence, $[x]_t=[x_c]_t$. Therefore, $X_t\subset\{[x_c]_t:\,x_c\in X_c\}$ and thus $X_t$ is countable.
\end{proof}

\paragraph{{Definitions of} $\dgh$ and $\ugh$.} Recall that $\ums$ denotes the collection of all compact ultrametric spaces. There exists a natural ultrametric on $\ums$ analogous to the Gromov-Hausdorff distance on $\ms$, the collection of all compact metric spaces. We first briefly review the definition of the Gromov-Hausdorff distance.

\begin{definition}[Gromov-Hausdorff distance]
Given two metric spaces $X$ and $Y$, we define the Gromov-Hausdorff distance $\dgh(X,Y)$ between them as follows:
$$\dgh(X,Y)\coloneqq\inf_Z \dH^Z(X,Y), $$
where the infimum is taken over all metric spaces $Z$ and isometric embeddings from $X$ to $Z$ and from $Y$ to $Z$.
\end{definition}

Now, we modify the definition of $\dgh$ by infimizing over $Z\in\ums$ instead of $Z\in\ms$ to obtain the Gromov-Hausdorff ultrametric:

\begin{definition}[Gromov-Hausdorff ultrametric]
Given two ultrametric spaces $X$ and $Y$, we define the Gromov-Hausdorff ultrametric $\ugh(X,Y)$ between them as follows:
$$\ugh(X,Y)\coloneqq\inf_Z \dH^Z(X,Y), $$
where the infimum is taken over all \emph{ultrametric} spaces $Z$ and isometric embeddings from $X$ to $Z$ and from $Y$ to $Z$.
\end{definition}

The following structural formula provides a precise method for computing $\ugh$ using the quotient operator on ultrametric spaces.

\begin{theorem}[Structural formula for $\ugh$ \cite{memoli2019gromov}]\label{thm:ugh}
Given $X,Y\in\ums$, we have
$$\ugh(X,Y)=\min\{t\geq 0:\,X_t\cong Y_t\}. $$
\end{theorem}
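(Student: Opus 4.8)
The plan is to set $S \coloneqq \{t\ge 0:\,X_t\cong Y_t\}$ and $r\coloneqq\inf S$, and to prove both that $\ugh(X,Y)=r$ and that this infimum is attained, so that $r=\min S$. First I would record two structural facts. Since $X$ and $Y$ are bounded, for all large $t$ both $X_t$ and $Y_t$ are one-point spaces, so $S\neq\emptyset$. Moreover, unwinding the definition of the quotient metric gives the iterated-quotient identity $(X_t)_s\cong X_s$ for all $s\ge t$; consequently any isometry $X_t\cong Y_t$ descends to an isometry $X_s\cong Y_s$, so $S$ is upward closed and $S\supseteq(r,\infty)$.

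For the bound $\ugh(X,Y)\le r$ I would glue $X$ and $Y$ along a common quotient. Given $t\in S$, fix an isometry identifying $Q\coloneqq X_t\cong Y_t$, let $\pi_X,\pi_Y$ denote the level-$t$ projections into $Q$, and on the disjoint union $Z\coloneqq X\sqcup Y$ define $u_Z$ to restrict to $u_X$ and $u_Y$ and to satisfy $u_Z(x,y)\coloneqq\max\!\left(t,\,u_Q(\pi_X(x),\pi_Y(y))\right)$ for $x\in X$, $y\in Y$. The point is that distinct classes of $Q$ lie at distance $>t$ while equal classes lie at distance $0$, so the strong triangle inequality for $u_Z$ reduces in every mixed case to the one in $Q$, and one checks that $Z$ is a compact ultrametric. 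Since $\pi_Y$ is onto $Q$, every $x\in X$ has a partner $y\in Y$ with $\pi_Y(y)=\pi_X(x)$ and hence $u_Z(x,y)=t$, and symmetrically, so $\dH^Z(X,Y)\le t$ and $\ugh(X,Y)\le t$. Letting $t\downarrow r$ gives $\ugh(X,Y)\le r$.

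For the reverse bound I would prove the following stability statement: whenever $X,Y$ embed isometrically into an ultrametric space $Z$ with $\dH^Z(X,Y)\le t$, one has $X_t\cong Y_t$. The map is $f\colon X_t\to Y_t$, $[x]_t\mapsto[y]_t$ for any $y\in Y$ with $u_Z(x,y)\le t$; both its well-definedness and the fact that it preserves the quotient distances follow from the ultrametric ``isosceles'' principle that $u_Z(x,y),u_Z(x',y')\le t$ together with $\max(u_Z(x,x'),u_Z(y,y'))>t$ forces $u_Z(x,x')=u_Z(y,y')$. Building the symmetric map $g\colon Y_t\to X_t$ the same way and invoking finiteness of the quotients (Lemma~\ref{lm:compact-quotient}) promotes these distance-preserving injections to a bijective isometry, so $t\in S$ and hence $t\ge r$. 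Applying this with $t=\dH^Z(X,Y)$ shows every competitor $Z$ satisfies $\dH^Z(X,Y)\ge r$, whence $\ugh(X,Y)\ge r$ and therefore $\ugh(X,Y)=r=\inf S$.

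It remains to see that the infimum is attained. When $r>0$, finiteness of $X_r$ and $Y_r$ forces the distances exceeding $r$ to be bounded away from $r$, so $X_s\cong X_r$ and $Y_s\cong Y_r$ for $s$ slightly above $r$; picking such an $s\in S$ yields $X_r\cong Y_r$, i.e.\ $r\in S$. The hard part will be the case $r=0$, where one must manufacture a global isometry $X\cong Y$ out of the finite isometries $X_s\cong Y_s$ available for every $s>0$; I would do this by a compactness (inverse-limit) argument along a sequence $s_n\downarrow 0$, choosing compatible isometries of the finite quotients and assembling their limit using compactness of $X$ and $Y$ to locate the image of each point as a nested intersection of classes of shrinking diameter. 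This limiting step, together with the routine but case-heavy verification that the glued space $Z$ is genuinely an ultrametric, is where I expect the real work to lie.
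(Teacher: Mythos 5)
The paper itself offers no proof of this theorem: it is imported verbatim from \cite{memoli2019gromov}, so there is no internal argument to compare against. Judged on its own, your proof is correct in all essentials and is, as far as this paper is concerned, a genuine filling-in of a black box. The decomposition is the natural one: (i) the upper bound $\ugh(X,Y)\le r$ by gluing $X$ and $Y$ along a common quotient $Q=X_t\cong Y_t$ with cross-distances $\max\bigl(t,u_Q(\pi_X(x),\pi_Y(y))\bigr)$, which is indeed an ultrametric because distinct classes of $Q$ are at distance $>t$; (ii) the lower bound via the stability statement that $\dH^Z(X,Y)\le t$ forces $X_t\cong Y_t$, using the isosceles principle and finiteness of the quotients (Lemma \ref{lm:compact-quotient}) to turn two distance-preserving injections into a bijective isometry; and (iii) attainment of the infimum. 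Two harmless edge cases deserve a line each: in (i) you never need $t=0$ (where the glued $u_Z$ would only be a pseudometric), since $S\supseteq(r,\infty)$ lets you take $t>r\ge0$; in (ii) the case $t=0$ is not covered by the finiteness argument, but $\dH^Z(X,Y)=0$ between compact images forces the images to coincide, so $X\cong Y$ directly.

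The only place where your write-up is thinner than the argument demands is the attainment case $r=0$, and you correctly flag it as the hard part. Be aware that ``choosing compatible isometries of the finite quotients'' is not a greedy choice: an isometry $X_{s_{n+1}}\cong Y_{s_{n+1}}$ need not descend to the isometry you already picked at level $s_n$. The clean way is exactly the inverse-limit argument you name: the sets $I_n\coloneqq\mathrm{Iso}(X_{s_n},Y_{s_n})$ are finite (Lemma \ref{lm:compact-quotient}) and nonempty, the iterated-quotient identity $(X_{s_{n+1}})_{s_n}\cong X_{s_n}$ makes descent into well-defined maps $I_{n+1}\to I_n$, and an inverse limit of nonempty finite sets is nonempty (K\"onig's lemma). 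Given a compatible family $(f_n)$, the classes $f_n([x]_{s_n})$ are nested closed subsets of the compact space $Y$ with diameter at most $s_n\to0$, so their intersection is a single point $f(x)$; distance preservation follows by evaluating at any $s_n<u_X(x,x')$, and surjectivity follows since $f(X)$ is compact and $s_n$-dense in $Y$ for every $n$. With that step made explicit, your proof is complete.
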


For $\eps\geq 0$, denote $\spec_\eps(X)\coloneqq\{t\in\spec(X):\,t\geq \eps\}$. Then, as an immediate consequence of Theorem \ref{thm:ugh}, we have the following:
\begin{corollary}\label{coro:app-ugh}
Given $X,Y\in\ums$, we have
$$\ugh(X,Y)\geq { \inf}\{\eps\geq 0:\,\spec_\eps(X)=\spec_\eps(Y)\}. $$
\end{corollary}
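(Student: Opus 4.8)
The plan is to read off the inequality directly from the structural formula in Theorem~\ref{thm:ugh} by understanding how the quotient operator acts on spectra. Write $t\coloneqq\ugh(X,Y)$. Since Theorem~\ref{thm:ugh} asserts that this value is attained as a minimum, we have $X_t\cong Y_t$, and isometric spaces share the same spectrum, so $\spec(X_t)=\spec(Y_t)$. Everything then reduces to identifying $\spec(X_t)$ in terms of $\spec(X)$.

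First I would establish the key computation
$$\spec(X_t)=\{0\}\cup\{s\in\spec(X):\,s>t\}.$$
This follows straight from the definition of $u_{X_t}$: for two distinct classes $[x]_t\neq[x']_t$ we have $u_X(x,x')>t$ and hence $u_{X_t}([x]_t,[x']_t)=u_X(x,x')$, while coincident classes contribute only the distance $0$. Thus the nonzero distances realized in $X_t$ are exactly the distances of $X$ lying strictly above $t$, and the same identity holds for $Y$.

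Combining these, the equality $\spec(X_t)=\spec(Y_t)$ yields $\{s\in\spec(X):\,s>t\}=\{s\in\spec(Y):\,s>t\}$. Now fix any $\eps>t$. Since every $s\geq\eps$ also satisfies $s>t$, intersecting both sides of the last equality with $[\eps,\infty)$ gives $\spec_\eps(X)=\spec_\eps(Y)$. Hence every $\eps>t$ lies in the set $\{\eps\geq 0:\,\spec_\eps(X)=\spec_\eps(Y)\}$, so its infimum is at most $t=\ugh(X,Y)$, which is exactly the claimed inequality.

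The computation is routine, so there is no genuine obstacle; the only point requiring care is the strict inequality. Forgetting the structure of the dendrogram below level $t$ erases precisely the distances $\leq t$, so from $X_t\cong Y_t$ one cannot conclude whether the value $t$ itself belongs to $\spec(X)$ or to $\spec(Y)$. This is why the argument controls $\spec_\eps$ only for $\eps>t$ and therefore produces the one-sided bound $\ugh(X,Y)\geq\inf\{\eps\geq0:\,\spec_\eps(X)=\spec_\eps(Y)\}$ rather than an equality.
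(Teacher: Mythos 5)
Your proof is correct and takes exactly the route the paper intends: the paper states this corollary as an immediate consequence of Theorem~\ref{thm:ugh} without writing out details, and your argument (using attainment of the minimum to get $X_t\cong Y_t$, computing $\spec(X_t)=\{0\}\cup\{s\in\spec(X):s>t\}$, and intersecting with $[\eps,\infty)$ for $\eps>t$) is precisely the natural filling-in of that deduction. Your closing remark on why only a one-sided bound follows is also accurate.
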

This corollary was first mentioned in \cite[Theorem 4.2]{qiu2009geometry}. Please also see \cite[Theorem 5.13]{memoli2019gromov} for a generalization.

\section{Urysohn universal ultrametric space}\label{sec:const}

\begin{definition}
Given an ultrametric space $X$, we say $X$ is \emph{universal}, if any Polish ultrametric space is isometrically embedable into $X$; we say $X$ is \emph{ultra-homogeneous} if for any finite ultrametric space $B$, a subset $A$ and an isometric embedding $\varphi:A\rightarrow X$, there exists an isometric extension $\psi:B\rightarrow X$ such that $\psi|_A=\varphi$.
\end{definition}

We call an ultrametric space $X$ an \emph{Urysohn universal ultrametric space} if $X$ satisfies the universality and ultra-homogeneity conditions. We do not require $X$ to be Polish as in the case of Urysohn universal \emph{metric} space since there exists no separable universal ultrametric space. 

\begin{theorem}\label{thm:u-urysohn}
$(\ums,\ugh)$ is a Urysohn universal ultrametric space.
\end{theorem}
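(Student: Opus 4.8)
The plan is to reduce the theorem to a single \emph{one-point extension property} for $(\ums,\ugh)$ and then derive both ultra-homogeneity and universality by standard iteration and completion. Precisely, I would prove: given a finite family $X_1,\dots,X_n\in\ums$ and prescribed values $r_1,\dots,r_n\ge 0$ such that $\ugh(X_i,X_j)$ together with the $r_i$ satisfy the strong triangle inequality on the abstract one-point extension $\{b,X_1,\dots,X_n\}$, there exists $Z\in\ums$ with $\ugh(Z,X_i)=r_i$ for every $i$. Ultra-homogeneity then follows by adjoining the finitely many points of $B\setminus A$ one at a time, each step realizing the prescribed $B$-distances to the already-placed images. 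Universality follows by enumerating a countable dense subset $\{w_1,w_2,\dots\}$ of a given Polish ultrametric space $W$, embedding it point-by-point via the extension property to obtain an isometric map on the dense set, and extending it to all of $W$ using that $(\ums,\ugh)$ is complete (images of a Cauchy sequence in $W$ are Cauchy, hence convergent, in $\ums$).

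For the extension property I would first sort $r_1\le\dots\le r_n$ and observe, via the structural formula (Theorem \ref{thm:ugh}), quotient transitivity $(X_s)_t=X_t$ for $t\ge s$, and the strong triangle inequality, that it suffices to control the distances to those $X_i$ with $r_i=r_1=:r$: for indices with $r_i>r$ the value $\ugh(Z,X_i)=r_i$ is forced automatically once $\ugh(Z,X_1)=r$. Writing $P:=(X_1)_r$ (a finite space whose nonzero distances exceed $r$, and which by the triangle condition is isometric to $(X_i)_r$ whenever $r_i=r$), I would build $Z$ so that $Z_r\cong P$; this single requirement yields $\ugh(Z,X_1)\le r$ and hence, with the strong triangle inequality, all the upper bounds $\ugh(Z,X_i)\le r_i$ at once. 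Concretely, I take $Z$ to be $P$ with one of its points replaced by a compact ultrametric space $C$ of diameter at most $r$, the remaining points kept as singletons, glued using the inter-point distances of $P$.

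The lower bounds are the crux. For Theorem \ref{thm:ugh} to give $\ugh(Z,X_i)\ge r$ I must ensure $Z_t\not\cong (X_i)_t$ for every $t\in[0,r)$ and every $i$ with $r_i=r$. Since $Z_t$ consists of $C_t$ together with $|P|-1$ singletons, one has $|Z_t|=|C_t|+|P|-1$, so it is enough to choose $C$ with $|C_t|>\max_i|(X_i)_t|=:N(t)$ for all $t\in(0,r)$; then $Z_t$ and $(X_i)_t$ differ merely in cardinality. For $t=0$ I would additionally force into $\spec(Z)$ a value $s^\ast<r$ lying outside the countable set $\bigcup_i\spec(X_i)$, so that $Z\not\cong X_i$. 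I expect the construction of $C$ to be the main obstacle. One cannot make $\spec(Z)$ ``new'' near level $r$, because the spectrum of a compact ultrametric space accumulates only at $0$ (above each $\delta>0$ it is finite), so Corollary \ref{coro:app-ugh} alone cannot separate $Z$ from $X_i$ at scales close to $r$; the separation must instead come from quotient \emph{cardinalities}. I would verify that $N$ is a non-increasing step function, finite on every $[\delta,r)$ (from Lemma \ref{lm:compact-quotient} and the finiteness of $\spec(X_i)\cap[\delta,\infty)$), and then realize a single compact $C$ of diameter $\le r$ whose dendrogram has at least $N(t)$ blocks at each level $t$, branching only finitely often above each $\delta$ so that $C$ remains totally bounded, and passing to its completion for compactness. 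Checking that the resulting $Z$ is genuinely compact and ultrametric, and that $\ugh(Z,X_i)=r_i$ exactly, then finishes the extension property and hence the theorem.
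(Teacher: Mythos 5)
Your proposal is correct and takes essentially the same route as the paper: a one-point extension lemma (Proposition \ref{prop:1p-ext}) proved via the structural formula of Theorem \ref{thm:ugh} --- upper bound by matching quotients at the minimal level $r$, lower bound by inflating quotient cardinalities, distances to farther points forced by the isosceles property --- followed by induction and completeness of $(\ums,\ugh)$. The only real difference is that you over-engineer the lower bound: since an isometry $Z_t\cong (X_i)_t$ at one level propagates to all higher levels, it suffices to break cardinality on a single interval such as $(\delta/2,\delta)$, which the paper does by adjoining just $N+1$ extra points with $N=\max_k\left|(X_k)_{\delta/2}\right|$, whereas your infinite compact $C$ with $|C_t|>N(t)$ for all $t\in(0,r)$ (and the spectrum value $s^\ast$ at $t=0$, which is likewise redundant for the same reason) is a correct but unnecessarily heavy construction.
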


Both universality and ultra-homogeneity properties of $\ums$ follows from the following key observation.

\begin{proposition}[One point extension]\label{prop:1p-ext}
Let $X$ be a finite ultrametric space with $|X|>1$ and $\hat{X}\subset X$ be a subspace such that $|\hat{X}|+1=|X|$. Then, if $\varphi:\hat{X}\rightarrow \ums$ is an isometric embedding, there exists an isometric embedding $\psi:X\rightarrow \ums$ such that $\psi|_{\hat{X}}=\varphi$.
\end{proposition}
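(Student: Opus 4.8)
The plan is to reduce the problem to attaching a single, carefully engineered finite cluster to the image of the nearest point, and then to read off all the required distances from the structural formula in Theorem~\ref{thm:ugh}. First I would fix the geometry of the extension: write $\hat X=X\setminus\{p\}$, let $q_0\in\hat X$ realize $r\coloneqq u_X(p,q_0)=\min_{q\in\hat X}u_X(p,q)$, and note $r>0$ since $|X|>1$. A short application of the strong triangle inequality \eqref{eq:strong-triangle} shows $u_X(p,q)=\max\bigl(r,u_X(q_0,q)\bigr)$ for every $q\in\hat X$; this is the only combinatorial fact about $X$ that I will use. Writing $Z\coloneqq\varphi(q_0)$ and using that $\varphi$ is isometric, so $\ugh(Z,\varphi(q))=u_X(q_0,q)$, the task becomes the construction of a single space $W\coloneqq\psi(p)\in\ums$ with $\ugh(W,\varphi(q))=\max\bigl(r,u_X(q_0,q)\bigr)$ for all $q$.

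For the construction I would glue a finite cluster to $Z$ at level $r$. Fix a basepoint $z_0\in Z$ and an integer $N$ to be chosen; let $G=\{g_1,\dots,g_N\}$ carry the metric $u_G(g_i,g_j)=r$ for $i\neq j$, and define $W\coloneqq Z\sqcup G$ by retaining the metrics of $Z$ and of $G$ and setting $u_W(g_i,z)\coloneqq\max\bigl(r,u_Z(z_0,z)\bigr)$. A routine check confirms $W$ is a compact ultrametric space. The choice of the value $\max(r,u_Z(z_0,z))$ is designed so that at level $r$ the whole cluster $G$ merges into the block $[z_0]_r$ while all distances among the level-$r$ blocks are left unchanged, giving $W_r\cong Z_r$; whereas for every $t<r$ the cluster stays split off from $Z$, so that $|W_t|=|Z_t|+N$.

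The verification runs through Theorem~\ref{thm:ugh}, whose use I would organize around one qualitative fact: for $s\geq t$ the quotient at level $s$ is a further quotient of the one at level $t$, so for any two spaces the set of levels at which their quotients are isometric is up-closed, and its complement is down-closed. Consequently a lower bound $\ugh\geq r$ follows as soon as the quotients are shown to be non-isometric on a single left-neighborhood of $r$, and an upper bound follows from isometry at one level. Since $W_r\cong Z_r$ and $u_X(q_0,q)\leq r$ forces $\varphi(q)_r\cong Z_r$, the bound $\ugh(W,\varphi(q))\leq r$ is immediate in that regime; while for $u_X(q_0,q)>r$ the agreement $W_t\cong Z_t$ for all $t\geq r$, combined with up-closedness, pins the distance to exactly $u_X(q_0,q)$.

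The hard part will be the uniform lower bound $\ugh(W,\varphi(q))\geq r$ holding for all $q$ at once, i.e.\ ensuring $W_t\not\cong\varphi(q)_t$ for every $q$ and every $t<r$; a single added point fails precisely when some $\varphi(q)$ already carries extra structure just below level $r$, and a low-diameter cluster fails because its points would remerge before reaching $r$. I would resolve this by a cardinality blow-up that is uniform near $r$: each $\varphi(q)$ being compact, Lemma~\ref{lm:compact-quotient} makes $\varphi(q)_{r/2}$ finite, and since quotient cardinalities are non-increasing in the level, $M\coloneqq\max_q|\varphi(q)_{r/2}|<\infty$ bounds $|\varphi(q)_t|$ for all $q$ and all $t\in[r/2,r)$. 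Taking $N=M+1$ then yields $|W_t|=|Z_t|+N>M\geq|\varphi(q)_t|$ throughout $[r/2,r)$, so the quotients cannot be isometric there; down-closedness upgrades this to all of $[0,r)$, giving $\ugh(W,\varphi(q))\geq r$ for every $q$ simultaneously. Together with the upper bounds this produces $\ugh(W,\varphi(q))=\max\bigl(r,u_X(q_0,q)\bigr)=u_X(p,q)$, so extending $\varphi$ by $\psi(p)=W$ yields the desired isometric embedding $\psi:X\to\ums$.
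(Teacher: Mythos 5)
Your proposal is correct and takes essentially the same route as the paper: glue a cluster of $N$ extra points at pairwise distance $r$ to a base space that agrees with the nearest image at level $r$, force the uniform lower bound $\ugh\geq r$ by a cardinality blow-up on levels in $[r/2,r)$ via Lemma~\ref{lm:compact-quotient}, and read off all distances from Theorem~\ref{thm:ugh}. The only cosmetic differences are that the paper attaches its $N+1$ stars to the common quotient $Z\cong(X_k)_\delta$ rather than to $\varphi(q_0)$ itself (so your $\max\bigl(r,u_Z(z_0,z)\bigr)$ correctly adapts its gluing metric), and it handles the non-nearest points via the isosceles property of the ultrametric $\ugh$ where you use the identity $u_X(p,q)=\max\bigl(r,u_X(q_0,q)\bigr)$ together with the up-/down-closedness of the set of levels with isometric quotients --- equivalent verifications of the same construction.
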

\begin{proof}
Assume that $X=\{x_1,\cdots,x_{n+1}\}$ and $\hat{X}=\{x_1,\cdots,x_n\}$ where $n\geq 1$. Let $X_{i}\coloneqq\varphi(x_i)$ for all $i=1,\cdots,n$. Let $\delta\coloneqq\min\{u_X(x_i,x_{n+1}):\,i=1,\cdots,n\}>0$. Then, we let
$$M_{n+1}\coloneqq \{i:\,u_X(x_i,x_{n+1})=\delta,i=1,\cdots,n\}= \mathrm{argmin}_{1\leq i\leq n}u_X(x_i,x_{n+1}).$$
For any $k,l\in M_{n+1}$, we have that $u_X(x_k,x_l)\leq \max(u_X(x_k,x_{n+1}),u_X(x_l,x_{n+1}))=\delta$.  Then, $\ugh(X_k,X_l)=u_X(x_k,x_l)\leq \delta$. By Theorem \ref{thm:ugh} we have that $(X_k)_\delta\cong (X_l)_\delta$. Therefore, there exists $Z\in\ums$ such that $Z\cong (X_k)_\delta$ for each $k\in M_{n+1}$. Let { {$N\coloneqq\max\left\{\left|(X_k)_\frac{\delta}{2}\right|:\,k\in M_{n+1}\right\}$.} By Lemma \ref{lm:compact-quotient}, we have that $Z$ is finite and that $N<\infty$.} Then, we define $X_{n+1}\coloneqq Z\cup \{*_1,\cdots,*_{N+1}\}$, where $\ast_i$s are $(N+1)$ distinguished points not belonging to $Z$ and introduce a function $u_{X_{n+1}}:X_{n+1}\times X_{n+1}\rightarrow\mathbb{R}_{\geq 0}$ as follows: we pick an arbitrary point $z_*\in Z$ and for all $i,j=1,\cdots,N+1$, we let
\begin{enumerate}
    \item $u_{X_{n+1}}(z_*,*_i)=\delta$;
    \item $u_{X_{n+1}}(*_i,*_j)=\delta\cdot\delta_{ij}$;
    \item $u_{X_{n+1}}(z,*_i)=u_{Z}(z_*,z)$, for any $z\in Z\backslash\{z_*\}$;
    \item $u_{X_{n+1}}|_{Z\times Z}=u_Z$.
\end{enumerate}

Then,  it is easy to check that $u_{X_{n+1}}$ is an ultrametric and thus $(X_{n+1},u_{X_{n+1}})\in\ums$. Obviously, we have that $(X_{n+1})_\delta\cong Z\cong(X_k)_\delta$ and thus $\ugh(X_{n+1},X_{k})\leq \delta$ for each $k\in M_{n+1}$. For the opposite inequality, we know from Lemma \ref{lm:compact-quotient} that both $(X_{k})_t$ and $(X_{n+1})_t$ are finite spaces for {  $\frac{\delta}{2}<t<\delta$}. By counting cardinalities, we have
$$|(X_k)_t|\leq { \left|(X_k)_\frac{\delta}{2}\right|}\leq N<N+1+|Z|=|(X_{n+1})_t|.$$
This implies that $(X_{n+1})_t\not\cong(X_{k})_t$. Therefore, $\ugh(X_{n+1},X_{k})= \delta=u_X(x_{n+1},x_k)$. 

Now for any $j\notin M_{n+1}$, we have that $u_X(x_j,x_{n+1})>\delta$. For each $k\in M_{n+1}$, we apply the strong triangle inequality for points $x_j,x_{n+1}$ and $x_k$. Then, we must have that
$$u_X(x_j,x_k)=u_X(x_j,x_{n+1})>\delta=u_X(x_k,x_{n+1}).$$
Thus, 
$$\ugh(X_j,X_k)=u_X(x_j,x_k)>u_X(x_k,x_{n+1})=\ugh(X_k,X_{n+1}).$$
Then, by applying the strong triangle inequality for $X_j,X_{n+1}$ and $X_k$, we have that 
$$\ugh(X_j,X_{n+1})=\ugh(X_j,X_k)=u_X(x_j,x_k)=u_X(x_j,x_{n+1}).$$
Therefore, the map $\psi:X\rightarrow\ums$ taking $x_i$ to $X_i$ for all $i=1,\cdots,n$ and $x_{n+1}$ to $X_{n+1}$ is an isometric embedding such that $\psi|_{\hat{X}}=\varphi$. 
\end{proof}

Though the proof of Proposition \ref{prop:1p-ext} is long, the essential idea and the constructions are easy to understand via dendrograms. See Figure \ref{fig:one-point-ext} for an illustration of the proof of Proposition \ref{prop:1p-ext} using dendrogram representations.

\begin{figure}[ht]
    \centering
    \includegraphics[width=\linewidth]{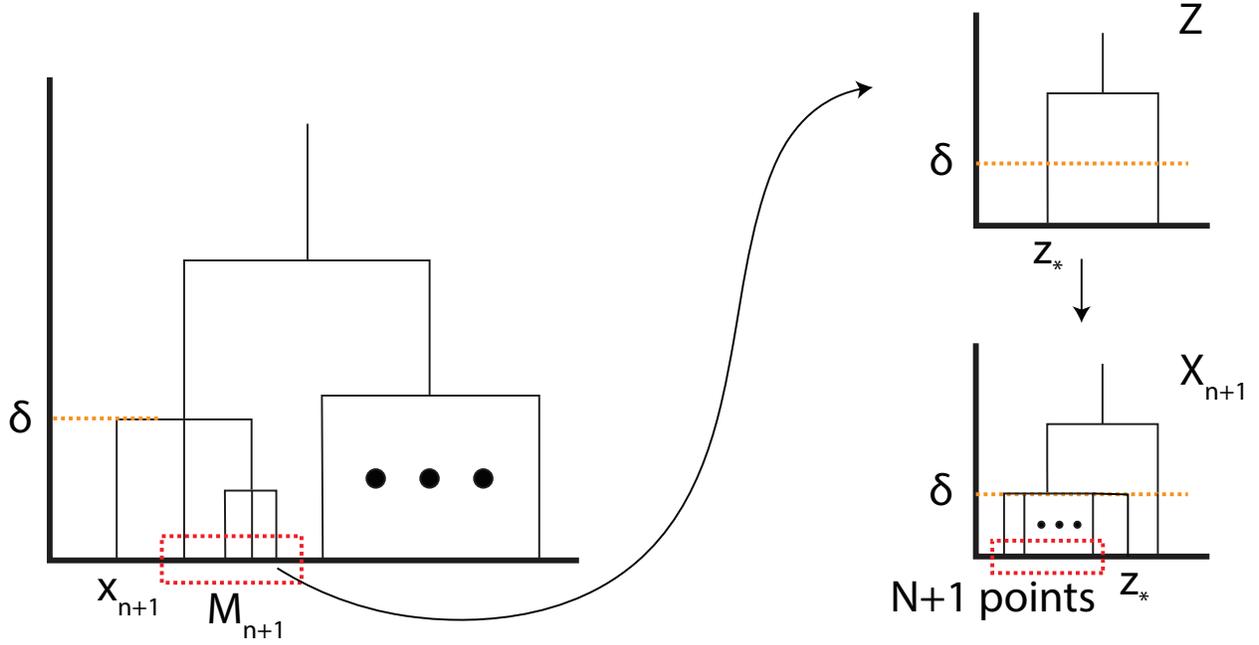}
    \caption{\textbf{Illustration of the proof of Proposition \ref{prop:1p-ext}.}}
    \label{fig:one-point-ext}
\end{figure}

\begin{proof}[Proof of Theorem \ref{thm:u-urysohn}]
We first prove universality of $\ums$. Assume that $X$ is a Polish ultrametric space. Let $X_c$ be a countable dense subset of $X$. Assume that $X_c=\{x_1,x_2,\cdots\}$ and let $X_n\coloneqq\{x_1,\cdots,x_n\}$ for $n=1,\cdots$. We construct an arbitrary map $\varphi_1:X_1\rightarrow\ums$. Then, by Proposition \ref{prop:1p-ext}, we inductively construct isometric embeddings $\varphi_n:X_n\rightarrow \ums$ for all $n=1,\cdots$ such that  $\varphi_{n+1}|_{X_n}=\varphi_n$. We then define a map $\varphi:X_c\rightarrow\ums$ as follows: for any $x\in X_c$, there exists $n$ such that $x\in X_n$ and we let $\varphi(x)\coloneqq\varphi_n(x)$. It is easy to see that $\varphi$ is well-defined and it is an isometric embedding. Now, since $X_c$ is dense in $X$ and $\ums$ is complete, there exists an extension $\hat{\varphi}:X\rightarrow\ums$ of $\varphi$ which is still an isometric embedding. Therefore, $\ums$ is universal.

Now we prove that $\ums$ is ultra-homogeneous. Suppose we have a finite ultrametric space $B$, a subset $A$ and an isometric embedding $\phi:A\rightarrow \ums$. Assume without loss of generality that $B=\{x_1,\cdots,x_n\}$ and $A=\{x_1,\cdots,x_k\}$ where $n\geq 2$ and $1\leq k< n$. Let $A_j\coloneqq A\cup\{x_{k+1},\cdots,x_{k+j}\}$ for $j=1,\cdots,n-k$. Let $A_0\coloneqq A$ and $\phi_0\coloneqq \phi$. Then, by Proposition \ref{prop:1p-ext} again, there exist isometric embeddings $\phi_j:A_j\rightarrow \ums$ for each $j=0,\cdots,n-k$ such that $\phi_{j+1}|_{A_j}=\phi_j:A_j\rightarrow\ums$. Therefore, $\psi\coloneqq \phi_{n-k}:A_{n-k}=B\rightarrow\ums$ is an isometric embedding such that $\psi|_A=\phi_{n-k}|_{A_0}=\phi_0=\phi$ and thus $\ums$ is ultra-homogeneous.
\end{proof}

\begin{remark}\label{rmk:spec}
In the proof of universality above, we can choose $\varphi_1$ such that $\varphi_1(x_1)=*$, where $\ast$ denotes the one-point ultrametric space. Then, by the construction in the proof of Proposition \ref{prop:1p-ext} and induction, we have that $\spec(\varphi(x_n))\subset\spec(X_n)$ for $n=1,\cdots$. Applying Corollary \ref{coro:app-ugh}, it is easy to see that $\hat{\varphi}:X\rightarrow\ums$ satisfying the following: for any $x\in X$, $\spec(\hat{\varphi}(x))\subset \spec(X)$. Similarly, in the proof of ultra-homogeneity above, if $\phi:A\rightarrow \ums$ is chosen such that $\spec(\phi(a))\subset\spec(B)$ for each $a\in A$, then $\psi:B\rightarrow\ums$ also satisfies that $\spec(\phi(b))\subset\spec(B)$ for each $b\in B$.
\end{remark}

\subsection{$R$-Urysohn universal ultrametric space} 
Let $R\subset\mathbb{R}_{\geq 0}$ be a countable set containing 0. We call an ultrametric space $X$ an \emph{$R$-ultrametric space} if $\spec(X)\subset R$.

\begin{definition}[\cite{gao2011polish}]\label{def:R-urysohn}
An $R$-ultrametric space $(X,u_X)$ is called an \emph{$R$-Urysohn universal ultrametric space} if 
\begin{enumerate}
    \item $X$ is Polish;
    \item $X$ is \emph{$R$-universal}, i.e., $X$ contains isometric copies of all Polish $R$-ultrametric spaces;
    \item $X$ is \emph{$R$-ultra-homogeneous}, i.e., $X$ satisfies the ultra-homogeneous condition for all finite $R$-ultrametric spaces.
\end{enumerate}
\end{definition}

According to \cite{gao2011polish}, for any countable $R\subset\mathbb{R}_{\geq 0}$, the $R$-Urysohn universal ultrametric space is unique up to isometry.

Denote by $\ums_R$ the collection of all compact ultrametric spaces $(X,u_X)$ such that $\spec(X)\subset R$. We still denote by $\ugh$ its restriction to $\ums_R$.

\begin{theorem}
$(\ums_R,\ugh)$ is the $R$-Urysohn universal ultrametric space.
\end{theorem}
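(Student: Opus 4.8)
The plan is to verify the three defining properties in Definition \ref{def:R-urysohn} for $(\ums_R,\ugh)$ and then to invoke the uniqueness of the $R$-Urysohn universal ultrametric space from \cite{gao2011polish}. The universality and ultra-homogeneity are essentially inherited from Theorem \ref{thm:u-urysohn} once one tracks spectra, so the genuinely new content lies in showing that $\ums_R$ is Polish --- which is precisely where the countability of $R$ must be used, since $\ums$ itself is not separable.

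First I would establish that $\ums_R$ is Polish. For completeness it suffices to show $\ums_R$ is closed in the complete space $(\ums,\ugh)$. Given $X$ in the closure and a value $r\in\spec(X)$ with $r>0$, pick $Y\in\ums_R$ with $\ugh(X,Y)<r$; by Theorem \ref{thm:ugh} there is $t=\ugh(X,Y)<r$ with $X_t\cong Y_t$. Since the nonzero spectrum of a quotient $X_t$ is precisely $\{s\in\spec(X):s>t\}$, and $r>t$, we get $r\in\spec(X_t)=\spec(Y_t)\subset\spec(Y)\subset R$; hence $\spec(X)\subset R$ and $X\in\ums_R$. For separability I would note that the isometry classes of finite $R$-ultrametric spaces form a countable set (an ultrametric on $n$ labeled points is a map into the countable set $R$), and that this set is dense: for $X\in\ums_R$ and $\eps>0$, Lemma \ref{lm:compact-quotient} makes $X_{\eps/2}$ a finite space, clearly $\spec(X_{\eps/2})\subset R$, and the identity $\left(X_{\eps/2}\right)_s\cong X_s$ for all $s\ge \eps/2$ yields $\ugh(X,X_{\eps/2})\le \eps/2<\eps$ through Theorem \ref{thm:ugh}.

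Next I would treat $R$-universality and $R$-ultra-homogeneity together by checking that the one-point extension of Proposition \ref{prop:1p-ext} can be carried out inside $\ums_R$ whenever the finite space being embedded is an $R$-ultrametric space and the given images already lie in $\ums_R$. Inspecting the construction, the new space $X_{n+1}=Z\cup\{*_1,\dots,*_{N+1}\}$ has spectrum $\spec(Z)\cup\{\delta\}$, where $Z\cong(X_k)_\delta$ forces $\spec(Z)\subset\spec(X_k)\subset R$ and $\delta=u_X(x_{n+1},x_k)\in\spec(X)\subset R$; hence $X_{n+1}\in\ums_R$. With this refinement the universality argument of Theorem \ref{thm:u-urysohn} runs verbatim for a Polish $R$-ultrametric space $X$: starting from $\varphi_1(x_1)=\ast$, the inductive images stay in $\ums_R$ (consistent with Remark \ref{rmk:spec}, which gives $\spec(\varphi(x_n))\subset\spec(X)\subset R$), and the completeness of $\ums_R$ just proved supplies the limiting embedding $\hat\varphi:X\to\ums_R$. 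The ultra-homogeneity argument likewise runs verbatim: starting from an arbitrary isometric $\phi:A\to\ums_R$, each one-point extension adds a point of $B$, so the introduced distance $\delta$ lies in $\spec(B)\subset R$ while $\spec(Z)$ stays inside the spectrum of an existing image, keeping the new image in $\ums_R$ and producing $\psi:B\to\ums_R$. Note that this only needs the input spectra to lie in $R$, not in $\spec(B)$, so it is slightly more than Remark \ref{rmk:spec} but follows from the same computation.

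The hard part is the Polish requirement, specifically separability: the ambient space $\ums$ fails to be separable, so the proof must genuinely exploit that $R$ is countable. The crux is the density of the countable family of finite $R$-ultrametric spaces, for which the quotient approximation $X\mapsto X_t$ together with the finiteness of $X_t$ from Lemma \ref{lm:compact-quotient} is the key device. Once all three conditions are verified, the uniqueness of the $R$-Urysohn universal ultrametric space identifies $(\ums_R,\ugh)$ as that space, completing the proof.
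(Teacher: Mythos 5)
Your proposal is correct and follows essentially the same route as the paper: Polishness via closedness of $\ums_R$ in the complete space $(\ums,\ugh)$ (you use Theorem \ref{thm:ugh} directly where the paper uses Corollary \ref{coro:app-ugh}) plus the countable dense family of finite $R$-ultrametric spaces, with $R$-universality and $R$-ultra-homogeneity inherited from the one-point extension of Proposition \ref{prop:1p-ext} by tracking spectra as in Remark \ref{rmk:spec}. Your explicit density estimate $\ugh(X,X_{\eps/2})\le\eps/2$ and your observation that ultra-homogeneity only needs input spectra in $R$ rather than in $\spec(B)$ are slightly more careful than the paper's terse treatment, but they are refinements of the same argument, not a different one.
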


\begin{proof}
Both $R$-universality and $R$-ultra-homogeneity of $(\ums_R,\ugh)$ follows from the same proof of Theorem \ref{thm:u-urysohn} combined with Remark \ref{rmk:spec}. 

Now we only need to show that $(\ums_R,\ugh)$ is a Polish space. Since $\ums$ is complete, any Cauchy sequence $\{X_n\}$ in $\mathcal{U}_R$ has a limit $X$ in $\ums$. Denote $\delta_n\coloneqq\ugh(X_n,X)$. If $\delta_n=0$ for some $n\in\mathbb{N}$, then $X\cong X_n$ and thus $X\in\ums_R$. Now we assume $\delta_n>0$ for all $n\in\mathbb{N}$. Then, by Corollary \ref{coro:app-ugh}, we have that
$$\ugh(X_n,X)\geq { \inf}\{\eps\geq 0:\,\spec_\eps(X)=\spec_\eps(X_n)\}. $$
Then, $\spec_{ {2\delta_n}}(X)=\spec_{ {2\delta_n}}(X_n)\subset R$. For any $0<t\in \spec(X)$, there exists $n$ large enough such that $t>{ {2\delta_n}}$ and thus $t\in\spec_{ {2\delta_n}}(X)\subset R$. Therefore, $\spec(X)\subset R$ and thus $X\in \ums_R$. This implies that $\ums_R$ is complete. Furthermore, the set of all finite ultrametric spaces with spectrum contained in $R$ is a countable dense subset of $\ums_R$ and thus $\ums_R$ is separable. In conclusion, $\ums_R$ is a Polish metric space.
\end{proof}

\subsection{Admissible order and universality}\label{sec:admissible-order}
In this section, we establish that each Polish ultrametric space admits a special total order and discuss one possible alternative approach to prove universality of $\ums$.

\begin{definition}\label{def:adm-order}
For any ultrametric space $(X,u_X)$, we call a total order $\leq$ on $X$ an \emph{admissible order} if $u_X(x,y)\leq u_X(x,z)$ whenever $x,y,z\in X$ and $x< y< z$.
\end{definition}

Assume that $X=\{x_1,\cdots,x_n\}$ endowed with $u_X$ is a finite ultrametric space. Suppose that $\leq$ is a total order on $X$ and assume without loss of generality that $x_i<x_j$ if and only if $i<j$. Then, by placing $x_i$s along the real line according to the order $x_1<x_2<\cdots<x_n$, one can draw a graphical representation of the dendrogram $\theta_X$ corresponding to $u_X$ without self-crossing if and only if the total order $\leq$ is admissible. See Figure \ref{fig:ultra-order} for an illustration when $n=3$. 

On the other hand, each graphical representation of $\theta_X$ without self-crossings (which must exists since a tree is a planar graph) gives rise to an admissible order on $X$: $x_i<x_j$ if $x_i$ is on the left of $x_j$. This actually implies that each finite ultrametric space admits at least on admissible order. We provide an alternative formal proof of the fact in the following lemma.

\begin{lemma}\label{lm:order-finite}
Assume that $X=\{x_1,\cdots,x_n\}$ is a finite ultrametric space. Then, $X$ admits an admissible order.
\end{lemma}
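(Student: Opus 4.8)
The plan is to prove existence of an admissible order by induction on $n=|X|$, exploiting the recursive block structure that the ultrametric imposes on $X$. The base case $n=1$ is trivial. For the inductive step, I would fix a threshold that splits $X$ into smaller pieces, order each piece recursively, and then concatenate the orderings. The key structural fact is that for an ultrametric, the relation ``$u_X(x,y)\leq t$'' is an equivalence relation for every $t\geq 0$ (this is exactly what makes the dendrogram $\theta_X$ well-defined), so $X$ decomposes cleanly into blocks at any chosen level.

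Concretely, first I would let $\rho\coloneqq\max\{u_X(x,x'):x,x'\in X\}$ be the diameter, which is positive since $|X|>1$, and pick any threshold $t$ strictly below $\rho$ but at least the second-largest value in $\spec(X)$ — equivalently, take $t$ so that the partition $\theta_X(t)$ into the equivalence classes of ``$u_X(\cdot,\cdot)\leq t$'' has at least two blocks $B_1,\dots,B_k$ with $k\geq 2$, each a proper nonempty subset of $X$. By construction, any two points in different blocks are at distance exactly $\rho$ (or at any rate $>t$), while distances within a block are $\leq t$. Each block $B_i$, with the restricted ultrametric, is a finite ultrametric space of cardinality strictly less than $n$, so by the inductive hypothesis it carries an admissible order $\leq_i$. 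I would then define the order on all of $X$ by declaring $B_1<B_2<\dots<B_k$ blockwise (all of $B_1$ before all of $B_2$, etc.) and using $\leq_i$ inside each $B_i$.

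The main work is verifying that this concatenated order is admissible, i.e. that $u_X(x,y)\leq u_X(x,z)$ whenever $x<y<z$. I would split into cases according to which blocks $x,y,z$ fall in. If all three lie in a single block $B_i$, admissibility is immediate from $\leq_i$. If $x$ and $y$ lie in one block but $z$ in a later block, then $u_X(x,z)>t\geq u_X(x,y)$ and we are done. If $x$ lies in an earlier block than $y$ and $z$ (which necessarily lie in blocks with index $\geq$ that of $y$), then both $u_X(x,y)$ and $u_X(x,z)$ exceed $t$; here I would use the strong triangle inequality together with the fact that $x$ is separated from everything outside its own block by the same large distance, so that $u_X(x,y)=u_X(x,z)$ and equality holds. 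The remaining case, $x,y$ in an earlier block and $z$ later, again gives $u_X(x,z)>t\geq u_X(x,y)$.

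The step I expect to require the most care is the cross-block case where $x$ sits strictly to the left of both $y$ and $z$: one has to argue that the distance from $x$ to any point outside its block is determined solely by how the two blocks sit relative to each other in the dendrogram, so that it does not matter whether the far endpoint is $y$ or $z$. This is where the ultrametric inequality does the real work, forcing $u_X(x,y)=u_X(x,z)$ rather than merely $u_X(x,y)\leq u_X(x,z)$. Choosing the threshold $t$ at the correct level — just below the diameter — is what guarantees that the ``outside'' distances are all equal and makes this equality transparent; a poorly chosen $t$ would leave several distinct large distances and complicate the verification.
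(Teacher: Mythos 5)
Your proof is correct, but it proceeds by a genuinely different induction than the paper's. The paper inserts points one at a time: given an admissible order on $X\setminus\{x_n\}$, it locates the set $M_n=\mathrm{argmin}_{i<n}u_X(x_i,x_n)$ of nearest neighbors of $x_n$, and splices $x_n$ into the existing order immediately before the $\leq$-smallest element of $M_n$, then verifies admissibility by a three-case analysis. You instead do a top-down divide-and-conquer: cut the dendrogram just below its root (at a threshold $t$ between the second-largest spectrum value and the diameter $\rho$), so that all cross-block distances equal $\rho$, recursively order each block, and concatenate. Your case analysis is complete and the key step is sound — choosing $t$ at the top level is exactly what forces $u_X(x,y)=u_X(x,z)$ in the cross-block case, and your observation that a poorly chosen $t$ would break the arbitrary ordering of blocks is accurate. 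Your approach has the virtue of directly formalizing the planar-embedding intuition the paper sketches before the lemma (subtrees drawn side by side yield an admissible order). What the paper's insertion argument buys, however, is that it extends verbatim to countable spaces (its Lemma \ref{lm:order-c}, needed for the Polish case in Theorem \ref{thm:order-polish}): one simply keeps inserting points along an enumeration. Your recursion does not generalize so readily — a countable ultrametric space may fail to attain its diameter (so there is no "top level" to cut at), and even when it does, the recursion depth need not be finite, so the induction on cardinality loses its footing. For the finite statement as posed, though, your argument is complete and correct.
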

\begin{proof}
We prove by induction on $n$. The case when $n=1$ is trivial. Now assume that $n>1$ and the claim holds true for all $(n-1)$-point ultrametric spaces. Then, there exists a total order on $X\backslash\{x_n\}$ such that $u_X(x_k,x_i)\leq u_X(x_k,x_j)$ for all $x_k< x_i< x_j$ and $1\leq k,i,j<n$. Let $M_n\coloneqq\mathrm{argmin}_{1\leq i<n}u_X(x_i,x_n)$. Take ${i_0}\in M_n$ such that $x_{i_0}\leq x_i$ for each $i\in M_n$. Then, for each $1\leq i<n$, if $x_i\geq x_{i_0}$, we let $x_n<x_i$; otherwise, we let $x_n>x_i$. This assignment gives rise to a total order on $X$. Now we check that this total order is admissible. It suffices to check the conditions for admissible orders for three distinct points $x_i,x_j,x_n\in X$ in the following three cases:
\begin{enumerate}
    \item $x_n<x_i<x_j$: by construction of the total order, we have that $x_{i_0}\leq x_i<x_j$. If $x_{i_0}=x_i$, then by definition of $i_0$, we have that $u_X(x_n,x_i)=u_X(x_n,x_{i_0})\leq u_X(x_n,x_j)$. Now we assume that $x_{i_0}<x_i$. Since $\leq$ is admissible on $X\backslash\{x_n\}$, we have that $u_X(x_{i_0},x_i)\leq u_X(x_{i_0},x_j)$. Therefore,
    \begin{align*}
        u_X(x_n,x_i)\leq \max(u_X(x_n,x_{i_0}),u_X(x_{i_0},x_i))\leq \max(u_X(x_n,x_{j}),u_X(x_{i_0},x_j)).
    \end{align*}
    Since $u_X(x_j,x_{i_0})\leq\max(u_X(x_j,x_n),u_X(x_n,x_{i_0}))\leq u_X(x_j,x_n)$, we have that $u_X(x_n,x_i)\leq u_X(x_n,x_{j}).$
    \item $x_i<x_n<x_j$: in this case we have that $x_i< x_{i_0}\leq x_j$. Then, $u_X(x_i,x_n)>u_X(x_n,x_{i_0})$ since $i\notin M_n$. Thus $u_X(x_i,x_n)=u_X(x_i,x_{i_0})$ by the strong triangle inequality. Therefore, $u_X(x_i,x_n)=u_X(x_i,x_{i_0})\leq u_X(x_i,x_j)$ since $\leq$ is admissible on $X\backslash\{x_n\}$.
    \item $x_i<x_j<x_n$: similar to the second case, we have that $x_i<x_j<x_{i_0}$ and $u_X(x_i,x_n)=u_X(x_i,x_{i_0})$. Therefore, $u_X(x_i,x_j)\leq u_X(x_i,x_{i_0})= u_X(x_i,x_n)$.
\end{enumerate}
\end{proof}

\begin{figure}
    \centering
    \includegraphics{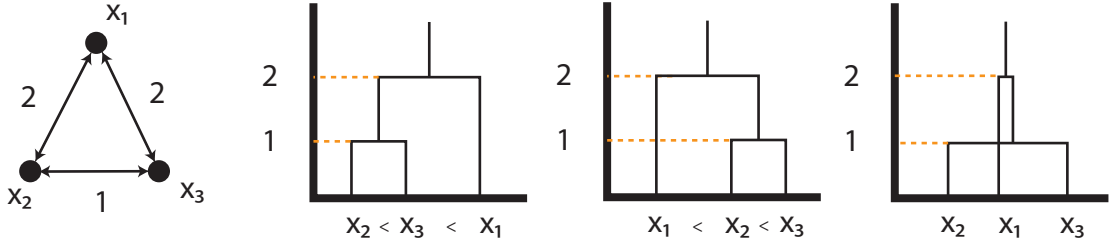}
    \caption{\textbf{Relation between admissible orders and dendrogram representations.} The leftmost figure is a 3-point ultrametric space $X$. The rest are graphical representations of the dendrogram $\theta_X$ based on three different total orders on $X$. Note that the middle two total orders are admissible and the corresponding graphical representations have no self-crossing whereas the rightmost order $x_2<x_1<x_3$ is not admissble and results in a self-crossing in the corresponding graphical representation of $\theta_X$. On the contrary, the middle two figures can be interpreted in the sense that each planar representation of $\theta_X$ generates an admissible order on the ultrametric space.}
    \label{fig:ultra-order}
\end{figure}

Based on Lemma \ref{lm:order-finite}, one can easily prove universality of $\ums$ for finite spaces:

\begin{proposition}\label{prop:finite-embedding}
Any finite ultrametric space is isometrically embedable in $(\ums,\ugh)$.
\end{proposition}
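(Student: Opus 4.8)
The plan is to turn the admissible order of Lemma \ref{lm:order-finite} into a single left-to-right sweep, exhibiting each image space explicitly as a quotient of its predecessor with a star attached, so that the existence step of Proposition \ref{prop:1p-ext} becomes an explicit recursion. First I would fix an admissible order and relabel so that $x_1 < x_2 < \cdots < x_n$. The key structural consequence of admissibility that I would isolate at the outset is the nearest-neighbor statement that, for every $i \ge 2$,
\[
u_X(x_{i-1},x_i) = \min_{1 \le j < i} u_X(x_j, x_i).
\]
Indeed, for $j < i-1$ we have $x_j < x_{i-1} < x_i$, so admissibility gives $u_X(x_j, x_{i-1}) \le u_X(x_j, x_i)$, and the strong triangle inequality applied to $x_{i-1}, x_j, x_i$ yields $u_X(x_{i-1}, x_i) \le \max(u_X(x_{i-1}, x_j), u_X(x_j, x_i)) = u_X(x_j, x_i)$. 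Thus the immediate predecessor $x_{i-1}$ always realizes the minimal distance from $x_i$ to the points already placed.

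With this in hand I would construct the images $X_i = \psi(x_i) \in \ums$ by induction on $i$, maintaining the invariant $\ugh(X_i, X_j) = u_X(x_i, x_j)$ for all $j < i$. Start with $X_1$ the one-point space. For $i \ge 2$ set $\delta_i := u_X(x_{i-1}, x_i)$, which by the display above equals $\min_{j<i} u_X(x_j, x_i)$, let $M_i := \mathrm{argmin}_{1 \le j < i} u_X(x_j, x_i)$, and — this is where the order pays off — take the explicit anchor $Z_i := (X_{i-1})_{\delta_i}$, which is finite by Lemma \ref{lm:compact-quotient}. Then define $X_i := Z_i \cup \{*_1, \ldots, *_{N_i + 1}\}$ exactly as in Proposition \ref{prop:1p-ext}, attaching $N_i + 1$ fresh points at level $\delta_i$, where $N_i := \max_{k \in M_i} |(X_k)_{\delta_i/2}|$. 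Because $x_{i-1} \in M_i$, all spaces $(X_k)_{\delta_i}$ with $k \in M_i$ are mutually isometric to $Z_i$ — their pairwise $\ugh$-distance is at most $\delta_i$ by the inductive hypothesis together with the strong triangle inequality in $X$ — so the anchor is well chosen.

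The verification then follows the template of Proposition \ref{prop:1p-ext}. By Theorem \ref{thm:ugh}, the relation $(X_i)_{\delta_i} \cong Z_i \cong (X_k)_{\delta_i}$ gives $\ugh(X_i, X_k) \le \delta_i$ for $k \in M_i$, while for $\delta_i/2 < t < \delta_i$ the count $|(X_i)_t| = |Z_i| + N_i + 1 > N_i \ge |(X_k)_t|$ forces $(X_i)_t \not\cong (X_k)_t$ and hence $\ugh(X_i, X_k) \ge \delta_i$; for $j \notin M_i$ one propagates using $u_X(x_j, x_i) = u_X(x_j, x_k)$ (strong triangle in $X$, any $k \in M_i$) and the corresponding strong triangle inequality for $\ugh$.

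I expect the main obstacle to be precisely this bookkeeping for the indices $j \notin M_i$: one must confirm that every distance to the already-embedded points — not just to the nearest neighbor — is reproduced exactly, and that the cardinality gap created by the star genuinely certifies the lower bound at all levels in $(\delta_i/2, \delta_i)$. These are the same points that make Proposition \ref{prop:1p-ext} delicate; the admissible order contributes the clean, sequential anchor $Z_i = (X_{i-1})_{\delta_i}$ that replaces the nonconstructive existence step of that proposition by an explicit recursion. It is also this reliance on a finite left-to-right sweep that I would expect to make the approach resist extension to Polish spaces.
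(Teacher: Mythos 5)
Your proof is correct, but it takes a genuinely different route from the paper's. You retain the full one-point-extension machinery of Proposition \ref{prop:1p-ext} --- the quotient anchor, the attached stars, the cardinality count on levels in $(\delta_i/2,\delta_i)$, and the propagation to indices $j\notin M_i$ via the isosceles property of ultrametrics --- and use the admissible order only to make the anchor explicit: your nearest-neighbor observation $u_X(x_{i-1},x_i)=\min_{1\le j<i}u_X(x_j,x_i)$ is correct and lets you take $Z_i=(X_{i-1})_{\delta_i}$ in place of the nonconstructive ``there exists $Z$'' step. The paper's proof dispenses with that machinery entirely: it maps $x_i$ to the initial segment $X_i\coloneqq\{x_j:\,j\le i\}$ endowed with the restricted metric, so the images are subspaces of $X$ itself, with no auxiliary construction at all. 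There, the upper bound $\ugh(X_i,X_j)\le t\coloneqq u_X(x_i,x_j)$ holds because admissibility forces $u_X(x_i,x_k)\le t$ for every $x_k\in X_j\setminus X_i$, so all new points merge into $[x_i]_t$ and $(X_i)_t\cong(X_j)_t$; the lower bound holds because for $s<t$ the class $[x_j]_s^{X_j}$ is distinct from every class coming from $X_i$, giving $|(X_j)_s|\ge|(X_i)_s|+1$. What the paper's route buys is a much lighter verification and, more importantly, independence from Proposition \ref{prop:1p-ext} --- the whole point of this proposition in the paper is to show that admissible orders \emph{alone} suffice for finite universality, as a candidate route toward general universality. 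What your route buys is a constructive sharpening of Proposition \ref{prop:1p-ext} (when points are fed in admissible order, the anchor can always be taken to be the predecessor's quotient); but as a proof of this particular statement it is essentially the induction already carried out in Theorem \ref{thm:u-urysohn}, so it demonstrates less about the order itself. Your closing remark on why the method resists extension to Polish spaces matches the paper's own discussion, though the paper locates the obstruction specifically in the failure of the cardinality-comparison step, not in the finiteness of the sweep.
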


\begin{proof}
Assume $X=(\{x_1,\cdots,x_n\},u_X)$ is a finite ultrametric space. By Lemma \ref{lm:order-finite}, $X$ admits an admissible order. Relabel $X$ such that $x_i<x_j$ if and only if $i<j$. Define $f:X\rightarrow \ums$ by mapping $x_i$ to $X_i\coloneqq\{x_j:j\leq i\}$ endowed with the restricted metric $u_X|_{X_i\times X_i}$. Take any $1\leq i<j\leq n$ and let $t\coloneqq u_X(x_i,x_j)>0$. Then, for each $x_k\in X_j\backslash X_i$, we have $u_X(x_i,x_k)\leq u_X(x_i,x_j)=t$ since $x_i<x_k\leq x_j$. Therefore, $[x_k]_t^{X_j}=[x_i]_t^{X_j}$ in $(X_j)_t$ for each $x_k\in X_j\backslash X_i$. This implies that the map $\varphi_t:(X_i)_t\rightarrow (X_j)_t$ taking $[x_l]_t^{X_i}$ to $[x_l]_t^{X_j}$ for each $l=1,\cdots,i$ is bijective and thus an isometry. By Theorem \ref{thm:ugh}, we have that $\ugh(X_i,X_j)\leq t$.

Now for any $s<t$, we have that $[x_j]_s^{X_j}\neq[x_i]_s^{X_j}$. Then, $|(X_j)_s|\geq|(X_i)_s|+1$ and thus $(X_i)_s\not\cong (X_j)_s$. Therefore, by Theorem \ref{thm:ugh} again we have that $\ugh(X_i,X_j)=t=u_X(x_i,x_j)$ and thus $f:X\rightarrow\ums$ taking $x_i$ to $X_i$ for each $i=1,\cdots,n$ is an isometric embedding.
\end{proof}

Now we generalize Lemma \ref{lm:order-finite} to the case of Polish ultrametric spaces.

\begin{lemma}\label{lm:order-c}
Any countable ultrametric space admits an admissible order.
\end{lemma}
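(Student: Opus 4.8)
The plan is to bootstrap from the finite case (Lemma \ref{lm:order-finite}) by constructing a \emph{coherent} sequence of admissible orders on finite prefixes of $X$ and then taking their union. The crucial observation I would extract from the proof of Lemma \ref{lm:order-finite} is that its inductive step is in fact \emph{order-preserving}: given any admissible order $\le'$ on a finite ultrametric space $Y$ and an additional point $x$ with $Y\cup\{x\}$ ultrametric, the recipe there (let $i_0$ be the $\le'$-least element of the set of nearest neighbors of $x$ in $Y$, and insert $x$ immediately below $x_{i_0}$, i.e. declare $x<y'$ exactly when $y'\ge' x_{i_0}$) yields an admissible order on $Y\cup\{x\}$ whose restriction to $Y$ is \emph{exactly} $\le'$. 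Indeed, the insertion rule alters no comparison among points of $Y$, and the three-case verification in that proof uses only that $\le'$ is admissible on $Y$ together with the defining property of $i_0$. I would first isolate this as a one-point order-extension statement.

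Next I would enumerate the countable space as $X=\{x_1,x_2,\dots\}$ and put $X_n:=\{x_1,\dots,x_n\}$. Using the order-extension statement, I inductively build admissible orders $\le_n$ on $X_n$ satisfying $\le_{n+1}|_{X_n}=\le_n$: start with the trivial order on $X_1$, and at stage $n$ insert $x_{n+1}$ into $(X_n,\le_n)$ as above. Coherence, namely $\le_m|_{X_n}=\le_n$ for all $m\ge n$, is then immediate by induction and transitivity of restriction. I would then define $\le$ on all of $X$ by declaring $x_i\le x_j$ iff $x_i\le_n x_j$ for some (equivalently, every) $n$ with $x_i,x_j\in X_n$; coherence makes this well-defined, and reflexivity, antisymmetry, totality, and transitivity are inherited from the finite total orders $\le_n$.

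Finally I would verify admissibility directly against Definition \ref{def:adm-order}: given $x<y<z$ in $X$, choose $n$ with $x,y,z\in X_n$; since $\le$ restricts to $\le_n$ on $X_n$ we have $x<_n y<_n z$, and admissibility of $\le_n$ gives $u_X(x,y)\le u_X(x,z)$, as required. Note that only countability of $X$ is used, not completeness or separability, so this handles the stated case cleanly.

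I expect the sole point requiring care to be the order-preservation upgrade of Lemma \ref{lm:order-finite}: one must confirm that inserting the new point immediately below its $\le'$-least nearest neighbor (a) leaves every pre-existing comparison in $Y$ untouched, and (b) still satisfies the three admissibility cases for triples involving the new point, exactly as in the original argument. Once that observation is recorded, the coherent-union construction and the passage to the limit are entirely routine.
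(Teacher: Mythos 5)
Your proposal is correct and follows essentially the same route as the paper: the paper's own proof also inserts $x_{n+1}$ immediately below the $\leq_n$-least element $x_{i_0}$ of $M_{n+1}=\mathrm{argmin}_{1\leq i\leq n}u_X(x_i,x_{n+1})$, thereby extending the order on $X_n$ to $X_{n+1}$, and takes the resulting order on the union. Your write-up merely makes explicit two points the paper leaves implicit (that the insertion rule preserves all existing comparisons, and that admissibility of the limit order follows since any triple lies in some $X_n$), which is a harmless elaboration rather than a different argument.
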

\begin{proof}
The proof is essentially the same as the one for Lemma \ref{lm:order-finite}. Assume $X=(\{x_1,\cdots\},u_X)$ is a countable ultrametric space. We construct an admissible order on $X$ by extending an admissible order on $X_n\coloneqq\{x_1,\cdots,x_n\}$ inductively for $n=1,2,\cdots$. 

Obviously, when $n=1$, there is nothing to construct. Assume that $n\geq 1$ and we have introduced an admissible order on $X_n$. Then, $u_X(x_k,x_i)\leq u_X(x_k,x_j)$ for all $x_k< x_i< x_j$ and $1\leq k,i,j\leq n$. Let $M_{n+1}\coloneqq\mathrm{argmin}_{1\leq i\leq n}u_X(x_i,x_{n+1})$. Take ${i_0}\in M_{n+1}$ such that $x_{i_0}\leq x_i$ for each $i\in M_{n+1}$. Then, for each $1\leq i\leq n$, if $x_i\geq x_{i_0}$, we let $x_{n+1}<x_i$; otherwise, we let $x_{n+1}>x_i$. It is easy to check as in the proof of Lemma \ref{lm:order-finite} that this assignment gives rise to an admissible order on $X_{n+1}$. Therefore, by this inductive process, there will be an admissible order on $X$.
\end{proof}

\begin{theorem}\label{thm:order-polish}
Assume that $X$ is a Polish ultrametric space. Then, $X$ admits an admissible order.
\end{theorem}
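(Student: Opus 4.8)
The plan is to extend an admissible order from a countable dense subset to all of $X$, using only separability (via Lemma \ref{lm:order-c}) rather than completeness. Fix a countable dense subset $X_c\subset X$ and, by Lemma \ref{lm:order-c}, an admissible total order $\preceq$ on $X_c$. The first step I would carry out is to reformulate admissibility geometrically: a total order on an ultrametric space is admissible in the sense of Definition \ref{def:adm-order} if and only if every closed ball is order-convex (an interval). The forward direction is immediate from the strong triangle inequality, and the converse follows by taking $t=u_X(x,z)$ for $x<y<z$ and noting that $y$ then lies in the ball $B(x,t)$ containing both $x$ and $z$. I expect this equivalence to be the workhorse of the argument.

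Next I would locate each point of $X$ relative to $X_c$. For $x\in X$ and $t>0$ set $I_t^x\coloneqq B(x,t)\cap X_c$. Density makes $I_t^x$ nonempty, and since any $y\in I_t^x$ satisfies $B(x,t)=B(y,t)$, the set $I_t^x$ is a genuine ball of $X_c$, hence order-convex for $\preceq$ by the reformulation above. The crucial observation is that for distinct $x,x'\in X$ and any $t<u_X(x,x')$ the balls $B(x,t)$ and $B(x',t)$ are disjoint, so $I_t^x$ and $I_t^{x'}$ are disjoint nonempty intervals of $(X_c,\preceq)$; two such intervals are always comparable, i.e.\ one lies entirely to the left of the other. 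I would then check that this left/right relation is constant in $t$ over the whole range $0<t<u_X(x,x')$ (shrinking $t$ shrinks both intervals, so the relation, once established, persists), which lets me define $x\prec x'$ to mean that $I_t^x$ lies to the left of $I_t^{x'}$ for small $t$. Verifying that $\prec$ is a total order reduces to trichotomy (disjoint nonempty intervals are strictly comparable) and transitivity (three pairwise-disjoint intervals ordered left-to-right at a common small radius), and one checks directly that $\prec$ restricts to $\preceq$ on $X_c$.

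The final and most delicate step is admissibility of $\prec$. Here I would argue by contradiction using the reformulation: if $x\prec y\prec z$ but $u_X(x,z)<\max(u_X(x,y),u_X(y,z))$, then the ultrametric inequality forces $u_X(x,y)=u_X(y,z)=b>u_X(x,z)=a$, and choosing any $t$ with $a<t<b$ gives $B(x,t)=B(z,t)$ while $B(y,t)$ is disjoint from it. Consequently $I_t^x=I_t^z$, yet $x\prec y$ gives $I_t^x$ to the left of $I_t^y$ and $y\prec z$ gives $I_t^y$ to the left of $I_t^z=I_t^x$, an impossibility. The main obstacle I anticipate is precisely this interaction between distinct points of $X$ that are not separated by $X_c$ at coarse scales: the definition of $\prec$ must be robust enough to order such points correctly, and it is the use of disjoint balls at radii below $u_X(x,x')$---rather than a single cut in $X_c$---that resolves it. I note that completeness of $X$ is never used, so the argument in fact shows that every separable ultrametric space admits an admissible order.
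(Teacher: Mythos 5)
Your proof is correct, but it takes a genuinely different route from the paper's. The paper works coarse-to-fine along the dendrogram: using Lemma \ref{lm:countable-quotient}, it builds by induction compatible admissible orders $\leq_n$ on the countable quotients $X_{\frac{1}{n}}$, applying Lemma \ref{lm:order-c} to $X_1$ and then again to each fiber $[x]_{\frac{1}{n}}=\bigcup_i [x_i]_{\frac{1}{n+1}}$ at every scale, and finally sets $x<x'$ according to $\leq_n$ at any scale where $[x]_{\frac{1}{n}}\neq[x']_{\frac{1}{n}}$. You instead apply Lemma \ref{lm:order-c} only once, to a countable dense subset $X_c$, and extend the resulting order to $X$ by comparing the traces $I_t^x=B(x,t)\cap X_c$; the work is carried by your reformulation of admissibility as order-convexity of closed balls, together with the fact that disjoint nonempty order-convex sets in a total order are comparable. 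All the steps you sketch do go through: $I_t^x$ is a genuine ball of $X_c$ because every point of an ultrametric ball is a center; disjointness of $B(x,t)$ and $B(x',t)$ for $t<u_X(x,x')$ gives trichotomy; monotonicity of $t\mapsto I_t^x$ gives well-definedness of $\prec$; and in the final isosceles-triangle argument the choice $a<t<b$ is exactly what licenses invoking both $x\prec y$ and $y\prec z$ at the common radius $t$, producing the contradiction. Comparing what each approach buys: the paper's proof reuses its quotient machinery and needs no new characterization of admissibility, but it requires countably many coordinated applications of Lemma \ref{lm:order-c} plus a consistency check across scales; yours isolates a reusable geometric criterion (admissible $\Leftrightarrow$ balls are order-convex), which is the precise formalization of the paper's ``planar drawing'' intuition, invokes Lemma \ref{lm:order-c} a single time, and makes the extension canonical, i.e., fully determined by the order on $X_c$. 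Your closing observation is accurate but does not actually distinguish the two arguments: the proof of Lemma \ref{lm:countable-quotient} uses only a countable dense subset, so the paper's proof, like yours, establishes the statement for every separable ultrametric space.
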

\begin{proof}
For each $n\in\mathbb{N}$, we construct inductively an admissible order $\leq_n$ on the quotient space $X_\frac{1}{n}$ such that if $[x]_\frac{1}{n}<_n[x']_\frac{1}{n}$ then $[x]_\frac{1}{m}<_m[x']_\frac{1}{m}$ for each pair $m>n$: by Lemma \ref{lm:countable-quotient}, $X_1$ is a countable set and thus by Lemma \ref{lm:order-c}, $X_1$ admits an admissible order $\leq_1$. Now suppose we have identified an admissible order $\leq_k$ on $X_\frac{1}{k}$ for each $1\leq k\leq n$ such that $[x]_\frac{1}{k}<_k[x']_\frac{1}{k}$ induces $[x]_\frac{1}{l}<_l[x']_\frac{1}{l}$ for all $1\leq k\leq l$. Now, we define a total order $\leq_{n+1}$ on the quotient space $X_{\frac{1}{n+1}}$:
\begin{enumerate}
    \item For any { $[x]_\frac{1}{n}\neq[x']_\frac{1}{n}$}, suppose without loss of generality that $[x]_\frac{1}{n}<_n[x']_\frac{1}{n}$, then we let $[x]_\frac{1}{n+1}<_{n+1}[x']_\frac{1}{n+1}.$
    
    \item Now each $[x]_\frac{1}{n}\in X_\frac{1}{n}$ is partitioned into at most countably many blocks at level $\frac{1}{n+1}$ (where we use Lemma \ref{lm:countable-quotient} again): $[x]_\frac{1}{n}=\cup_{i=1}^\infty[x_i]_{\frac{1}{n+1}}$. We use Lemma \ref{lm:order-c} again to introduce an admissible order $\leq_x$ on the subspace $\left\{[x_i]_\frac{1}{n+1}:i=1,\cdots\right\}\subset X_\frac{1}{n+1}$. Then, we let $[x_i]_\frac{1}{n+1}<_{n+1}[x_j]_\frac{1}{n+1}$ if and only if $[x_i]_\frac{1}{n+1}<_x[x_j]_\frac{1}{n+1}$ for $i,j=1,\cdots$.
\end{enumerate}
It is easy to check that $\leq_{n+1}$ is an admissible order on $X_\frac{1}{n+1}$.

Now we define a total order $\leq$ on $X$: for any $x\neq x'\in X$, suppose $n\in\mathbb{N}$ is such that $[x]_\frac{1}{n}\neq [x']_\frac{1}{n}$. Suppose without loss of generality that { $[x]_\frac{1}{n}<_n [x']_\frac{1}{n}$}, then we let $x<x'$. It is easy to see that this assignment does not depend on the choice of $n\in\mathbb{N}$ and $\leq$ is admissible.
\end{proof}

\paragraph*{Discussion.} Though it seems promising to use Theorem \ref{thm:order-polish} to prove universality of $\ums$ as we do in Proposition \ref{prop:finite-embedding}, there is a technical issue that we have not yet resolved: in the proof of Proposition \ref{prop:finite-embedding}, we use the difference between cardinalities of sets to distinguish non-isometric spaces. However in the case of Polish spaces, the cardinality of any encountered set may be infinite and thus the cardinality comparison method may fail to work. It seems interesting to study whether one could utilize Theorem \ref{thm:order-polish} via some refined analysis to prove the universality of $\ums$.

\subsection*{Acknowledgements} 
This work was partially supported by the NSF through grants CCF-1740761 and DMS-1723003.

\bibliography{biblio-ughUrysohn}
\bibliographystyle{alpha}


\end{document}